\documentclass[12pt]{article}
%%%%%%%%%%%%%%%%%%%%%%%%%%%%%%%%%%%%%%%%%%%%%%%%%%%%%%%%%%%%%%%%%%%%%%%%%%%%%%%%%%%%%%%%%%%%%%%%%%%%%%%%%%%%%%%%%%%%%%%%%%%%%%%%%%%%%%%%%%%%%%%%%%%%%%%%%%%%%%%%%%%%%%%%%%%%%%%%%%%%%%%%%%%%%%%%%%%%%%%%%%%%%%%%%%%%%%%%%%%%%%%%%%%%%%%%%%%%%%%%%%%%%%%%%%%%
\usepackage{amsfonts}
\usepackage{amssymb}
\usepackage{graphicx}
\usepackage{amsmath}

\setcounter{MaxMatrixCols}{10}
%TCIDATA{OutputFilter=LATEX.DLL}
%TCIDATA{Version=5.00.0.2606}
%TCIDATA{<META NAME="SaveForMode" CONTENT="1">}
%TCIDATA{BibliographyScheme=Manual}
%TCIDATA{Created=Mon May 24 09:06:14 2004}
%TCIDATA{LastRevised=Friday, January 15, 2010 16:19:51}
%TCIDATA{<META NAME="GraphicsSave" CONTENT="32">}
%TCIDATA{<META NAME="DocumentShell" CONTENT="General\Blank Document">}
%TCIDATA{Language=American English}
%TCIDATA{CSTFile=LaTeX article (bright).cst}

\newtheorem{theorem}{Theorem}
\newtheorem{acknowledgement}[theorem]{Acknowledgement}

\newtheorem{corollary}[theorem]{Corollary}

\newtheorem{example}[theorem]{Example}

\newtheorem{lemma}[theorem]{Lemma}

\newtheorem{proposition}[theorem]{Proposition}
\newtheorem{remark}[theorem]{Remark}

\newenvironment{proof}[1][Proof]{\textbf{#1.} }{\ \rule{0.5em}{0.5em}}
\hoffset=0cm
\voffset=2cm
\textwidth=16.5cm
\textheight=21cm
\topmargin=0cm
\headheight=0cm
\headsep=0cm
\oddsidemargin=2.5cm
\evensidemargin=0cm
\marginparsep=0cm
\marginparwidth=0cm
\setlength{\parindent}{1cm}
\oddsidemargin=0.5cm\setlength{\parindent}{1cm}
\input{tcilatex}

\begin{document}

\title{\textbf{A Causal Construction of Diffusion Processes }}
\author{Tadeusz \ BANEK \\
%EndAName
Dept. of Quantitative Method, Faculty of Management\\
Tech. Univ. of Lublin, Nadbystrzycka 38, 20-618 Lublin, Poland, \\
e-mail; t.banek@pollub.pl}
\maketitle
\date{}

\begin{abstract}
A simple nonlinear integral equation for Ito's map is obtained. Although, it
does not include stochastic integrals, it does give causal construction of
diffusion processes which can be easily implemented by iteration systems.
Applications in financial modelling and extension to fBm are discussed.

\textbf{key words: }diffusion processes, fBm, translation of Wiener
processes, Girsanov theorem
\end{abstract}

\section{Introduction}

Diffusions are an important class of stochastic processes. They are Markov,
and have continuous trajectories. There are extensive, competent historical
surveys of the topic by D.W. Stroock given in [3, 5, 6], and we recommend
Stroock's discussion to the interested reader. Here, we shall point out only
a main stages. Historically, the first construction was given by A.N.
Kolmogorov in his 1931 famous paper [1], and, since then, a problem of
constructing diffusions in $\mathbb{R}^{n}$, having differential operators
as generators, and no barriers, is known as the Kolmogorov problem. We shall
restrict our attention to the later class and call it (for short), K$-$
diffusions. The second construction of K$-$ diffusions was given by K. Ito
in [2] (and it is called Ito diffusions too). The theory of
ordinary-stochastic equations by H. Sussman [7] and H. Doss [8], and some
modification (see [10]), may be regarded as a deterministic variant of Ito's
theory. The third is known as a solution of D.W. Stroock and S.R.S. Varadhan
martingale problem (see [3]). The fourth is given by the Isobe-Sato formula
[4], which gives Wiener-Ito integrals for chaos decomposition of K$-$
diffusions.

In this paper, we propose a new, pathwise variant of Ito's construction of K$%
-$ diffusions. Although the construction uses a Wiener process (Ito's idea),
it does not involve Ito's integrals. It consists in:

(a) Solving a nonlinear, deterministic, Volterra type integral equation%
\begin{equation}
c\left( w\left( t\right) -\int_{o}^{t}\varkappa \left( x\left( s\right)
\right) ds\right) =x\left( t\right)  \label{0}
\end{equation}%
where $w\in C_{T}\triangleq C\left( \left[ 0,T\right] ;\mathbb{R}\right) $, $%
c$ and $\varkappa $ are ordinary scalar function to be specify later. Under
mild assumptions (\ref{0}) can be solved pathwise and nonanticipative, i.e.,
for any $w$, $v\in C_{T}$ one finds $x_{w}$, $x_{v}\in C_{T}$, such that
restrictions $x_{w}$, $x_{v}$, on $\left[ 0,t\right] $ coincide, if $w\left(
s\right) =v\left( s\right) $, $s\in \left[ 0,t\right] $.

(b) Forming a map $X_{t}\left( w\right) :\left[ 0,T\right] \times
C_{T}\rightarrow \mathbb{R}$, such that $X_{t}\left( w\right) =x_{w}\left(
t\right) $. Hence, $X\left( w\right) $ belongs to the space $\mathfrak{G}%
\left( C_{T}\right) $ of all nonanticipative mappings from $C_{T}$ to $C_{T}$%
, and it is a fix point of the operator $\mathfrak{L}:\mathfrak{G}\left(
C_{T}\right) \rightarrow \mathfrak{G}\left( C_{T}\right) $, defined by%
\begin{equation}
\mathfrak{L}\left( X\left( w\right) \right) \left( t\right) =c\left( w\left(
t\right) -\int_{o}^{t}\varkappa \left( X\left( w\right) \left( s\right)
\right) ds\right)  \label{1}
\end{equation}%
where we adopt the convention $X\left( w\right) \left( t\right) =X_{t}\left(
w\right) $.

(c) Showing that $X_{t}\left( w\right) $ is a K$-$ diffusions assuming that $%
w\left( t\right) $, $t\in \left[ 0,T\right] $ is\ a Wiener process.

(d) Proving, it is true in the opposite direction as well, i.e., if $%
X_{t}\left( w\right) $ is a K$-$ diffusion, then it is a fix point of $%
\mathfrak{L}$.

It is instructive, to compare an intuitive picture behind Ito's theory
(here, we again recommend Stroock [3, 5, 6]), with the picture of K$-$
diffusions as suggested by (\ref{0}). In the first picture, infinitesimal
increments of K$-$ diffusions are resulting from combined effects of two
forces: a deterministic drift and random (Gaussian) fluctuations. Since
combination here means a sum, the both forces (deterministic and random)
have the same status in creation of K$-$ diffusions. However, (\ref{0})
suggests other picture, or looking from cybernetic perspective, better to
say a \textquotedblleft behavior\textquotedblright . Namely, $x_{w}$ follows 
$w$, what is easily visible on the diagram below%
\begin{equation*}
\left[ 
\begin{array}{c}
w\longrightarrow ^{\left( +\right) }\otimes \text{\ }\longrightarrow \left[
c\left( \cdot \right) \right] \longrightarrow \downarrow \longrightarrow 
\text{ }x_{w} \\ 
\text{ \ \ }^{\left( -\right) }\uparrow \longleftarrow \left[ \int \right] 
\text{ }\longleftarrow \left[ \varkappa \left( \cdot \right) \right]%
\end{array}%
\right]
\end{equation*}%
which explain the idea of simple iteration system which works according to (%
\ref{0}). With $y\left( t\right) \triangleq \int_{0}^{t}\varkappa \left(
x\left( s\right) \right) ds$, this behavior is even more explicit%
\begin{equation*}
\frac{d}{dt}y\left( t\right) =\varkappa \circ c\left( w\left( t\right)
-y\left( t\right) \right)
\end{equation*}%
Hence $y_{w}\left( t\right) \triangleq \int_{0}^{t}\varkappa \left(
x_{w}\left( s\right) \right) ds$ follows $w$ with the speed equals the image
of the difference $w\left( t\right) -y_{w}\left( t\right) $ under $\varkappa
\circ c$. Thus, in this picture we have pure \textbf{deterministic}
mechanism, expressed in the terms of $\varkappa \circ c$ composition, which
forces $y_{w}$ to follow a \textbf{random }path $w$. Even more, a rule of
producing actions according to the current errors is known in Automatic
Control as a classical \textbf{feedback rule}, which in turns, is the most
transparent idea of \textbf{Cybernetics}. Is there any Variational Principle
responsible for this rule is an open question.

The paper is organized as follows. In a preliminary section we state an
auxiliary result on (\ref{0}). In the next section we prove an equivalence
theorem, which is the main result of this paper. Several corollaries are
also included. Indication for financial mathematics is discussed next.
Finally, a partial extension to fBm is included in the last section.

\section{Preliminaries}

We state here the following

\begin{lemma}
Assume $c:\mathbb{R\rightarrow R}$ is locally Lipschitz and $\varkappa :%
\mathbb{R\rightarrow R}$ measurable and bounded. Then, (a) for any $w\in
C_{T}$, there exists a unique $x_{w}\in C_{T}$ satisfying (\ref{0}), (b) for
any $w\in C_{T}$ and any $\xi \in C_{T}$, a sequence of successive
approximation%
\begin{eqnarray*}
x_{0} &=&\xi \text{, \ }x_{n+1}=\Phi _{w}\left( x_{n}\right) \\
\Phi _{w}\left( x\right) \left( t\right) &\triangleq &c\left( w\left(
t\right) -\int_{o}^{t}\varkappa \left( x\left( s\right) \right) ds\right)
\end{eqnarray*}%
is convergent in any norm $\left\Vert \cdot \right\Vert _{\lambda }$, $%
\lambda \geq 0$, to $x_{w}$, where $\left\Vert x\right\Vert _{\lambda }=\max
\left\{ e^{-\lambda t}\left\vert x\left( t\right) \right\vert ;0\leq t\leq
T\right\} $, (c) a mapping $C_{T}\ni w\mapsto X\left( w\right) \triangleq
x_{w}\in C_{T}$ is locally Lipschitz (in any $\left\Vert \cdot \right\Vert
_{\lambda }$, $\lambda \geq 0$), and nonanticipating.
\end{lemma}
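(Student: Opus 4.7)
The plan is to reformulate (\ref{0}) as a fixed point equation $\Phi_w(x)=x$ and apply the Banach contraction principle on $(C_T,\|\cdot\|_\lambda)$ with the Bielecki weighted norm for $\lambda$ sufficiently large. A useful preliminary observation is that the argument of $c$, namely $w(t)-\int_0^t\varkappa(x(s))\,ds$, stays in the bounded interval $[-R,R]$ with $R=\|w\|_\infty+MT$ (where $M=\sup|\varkappa|$), uniformly over $x\in C_T$. Consequently the local Lipschitz property of $c$ may be replaced, on the relevant range, by a single global Lipschitz constant $L=L_c(R)$, and $\Phi_w$ is easily seen to send $C_T$ into itself.

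For parts (a) and (b), granting a Lipschitz estimate on $\varkappa$ with constant $L_\varkappa$ (see the remark below on the stated hypothesis), a routine computation gives
\[
\bigl|\Phi_w(x)(t)-\Phi_w(\tilde x)(t)\bigr|\le L\,L_\varkappa\int_0^t |x(s)-\tilde x(s)|\,ds,
\]
which, after multiplication by $e^{-\lambda t}$ and taking the supremum in $t$, becomes $\|\Phi_w(x)-\Phi_w(\tilde x)\|_\lambda\le (LL_\varkappa/\lambda)\|x-\tilde x\|_\lambda$. Choosing $\lambda>LL_\varkappa$ makes $\Phi_w$ a strict contraction, and Banach's theorem supplies a unique fixed point $x_w\in C_T$ together with convergence of the iterates from any starting $\xi\in C_T$. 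Because all the norms $\|\cdot\|_\lambda$ are mutually equivalent on $C_T$ (dominated both ways by multiples of $\|\cdot\|_0=\|\cdot\|_\infty$), convergence in the chosen $\|\cdot\|_\lambda$ automatically yields convergence in every $\|\cdot\|_\lambda$, $\lambda\ge 0$.

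For (c), local Lipschitz dependence on $w$ is obtained by writing the analogous estimate
\[
|x_w(t)-x_v(t)|\le L\,|w(t)-v(t)|+L\,L_\varkappa\int_0^t |x_w(s)-x_v(s)|\,ds
\]
and applying Gronwall's inequality, which gives $\|x_w-x_v\|_\infty\le L\,e^{LL_\varkappa T}\|w-v\|_\infty$, transferred to each Bielecki norm by equivalence. Nonanticipation is essentially tautological from the definition of $\Phi_w$: if $w(s)=v(s)$ for $s\in[0,t]$, then $\Phi_w(x)(s)=\Phi_v(x)(s)$ for all $s\le t$ and any $x$; starting the iteration from a common $\xi$ preserves this agreement inductively, and it is inherited by the limits, so $x_w(s)=x_v(s)$ on $[0,t]$.

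The main obstacle I foresee is conceptual rather than computational: the stated hypothesis that $\varkappa$ be merely measurable and bounded is insufficient for the Bielecki contraction, which needs some Lipschitz control on $\varkappa$ to bound $|\varkappa(x(s))-\varkappa(\tilde x(s))|$ pointwise by $|x(s)-\tilde x(s)|$. With only measurability and boundedness one can still salvage existence through Schauder's theorem (the image of $\Phi_w$ lies in an equicontinuous, hence compact, subset of $C_T$), but uniqueness and convergence of \emph{every} successive approximation sequence may fail. I would therefore read the lemma under the implicit further assumption that $\varkappa$ is locally Lipschitz, which is in any case standard for the diffusion coefficients used in the sequel.
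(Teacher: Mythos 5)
Your proof is essentially correct (under the extra regularity you flag), and it takes a somewhat different route from the paper's. The paper omits details and sketches a two-step argument: first establish (a)--(c) when $c$ is globally Lipschitz, then pass to locally Lipschitz $c$ by a continuation method. You bypass continuation entirely by the a priori observation that the argument of $c$ stays in the fixed compact interval $[-R,R]$, $R=\Vert w\Vert_{\infty}+MT$ (with $M=\sup|\varkappa|$), so a single Lipschitz constant $L=L_{c}(R)$ suffices and one application of the Banach contraction in a Bielecki norm yields (a) and (b), with Gronwall then giving the Lipschitz dependence on $w$ and nonanticipation in (c). This is arguably cleaner, and the dependence of $R$ (hence $L$) on $\Vert w\Vert_{\infty}$ is precisely why the map $w\mapsto x_{w}$ is only \emph{locally} Lipschitz, as the statement claims.

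Your reservation about the hypothesis on $\varkappa$ is well taken, and in fact the situation is sharper than your hedged ``may fail'': with $\varkappa$ merely measurable and bounded the lemma is false as written, not just inaccessible to the contraction method. Take $c(x)=x$, $w\equiv 0$ and $\varkappa(u)=-\operatorname{sign}(u)\min\left(\sqrt{|u|},1\right)$, which is continuous and bounded; then (\ref{0}) reads $x(t)=\int_{0}^{t}\operatorname{sign}\left(x(s)\right)\min\left(\sqrt{|x(s)|},1\right)ds$, solved both by $x\equiv 0$ and by $x(t)=t^{2}/4$ on $[0,2]$, so uniqueness in (a) fails even though $c$ is globally Lipschitz --- i.e.\ the paper's own first step is affected by the same defect, and some Lipschitz-type control on $\varkappa$ (as you assume) is genuinely needed for (a), (b). One quibble with your salvage remark: Schauder's theorem also requires $\Phi_{w}$ to be continuous on $C_{T}$, which fails for discontinuous measurable $\varkappa$ (e.g.\ $\varkappa=\mathbf{1}_{(0,\infty)}$ with $x_{n}\equiv 1/n\rightarrow 0$ shows $\int_{0}^{t}\varkappa(x_{n}(s))ds$ need not converge to $\int_{0}^{t}\varkappa(x(s))ds$), so even existence can break down under the stated hypotheses; continuity of $\varkappa$ is what a Schauder/Peano argument buys, and Lipschitz continuity is what uniqueness and the convergence of every successive-approximation sequence require.
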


\begin{proof}
The proof consists in two steps. In the first, one can show (a),(b),(c) hold
when $c$ is globally Lipschitz. In the second, one can apply a method of
continuation in the locally Lipschitz case. The proof is standard hence it
is omitted.
\end{proof}

\section{Equivalence theorem}

Let $g\in C^{1}\left( \Delta \right) $, $\Delta \subset \mathbb{R}$ and $%
f\in C\left( \mathbb{R}\right) $. Define two functions: 
\begin{equation}
c^{\prime }\left( x\right) =g\left( c\left( x\right) \right)  \label{2}
\end{equation}%
and%
\begin{equation}
\varkappa \left( x\right) =\frac{g^{\prime }\left( x\right) }{2}-\frac{%
f\left( x\right) }{g\left( x\right) }  \label{3}
\end{equation}

\begin{example}
(a) Let $g\left( x\right) =\sqrt{1+x^{2}}$. Then $c\left( x\right) =\sinh
\left( a+x\right) $. For an arbitrary $\phi \in C\left( \mathbb{R}\right) $,
set $f\left( x\right) =\frac{x}{2}-\phi \left( x\right) \sqrt{1+x^{2}}$,
then $\varkappa \left( x\right) =\phi \left( x\right) $. (b) Let $g\left(
x\right) =\left\vert x\right\vert ^{\alpha },\left\vert \alpha \right\vert
<1 $. Then for $a\in \mathbb{R}$, we have $c\left( x\right) =\left[
sign\left( a+x\right) \right] \left[ \left( 1-\alpha \right) \left\vert
a+x\right\vert \right] ^{1/1-\alpha }$. For $\phi \in C\left( \mathbb{R}%
\right) $, set $f\left( x\right) =\frac{\alpha }{2}sign\left( x\right)
\left\vert x\right\vert ^{2\alpha -1}-\left\vert x\right\vert ^{\alpha }\phi
\left( x\right) $, then $\varkappa \left( x\right) =\phi \left( x\right) $.
\end{example}

\begin{theorem}
Assume $c:\mathbb{R\rightarrow R}$, $\varkappa :\mathbb{R\rightarrow R}$
satisfy (\ref{2})(\ref{3}) and $\varkappa $ is bounded. If $w\left( t\right) 
$, $t\in \left[ 0,T\right] $ is\ a Wiener process on $\left( \Omega ,%
\mathfrak{F},\mathbb{P}\right) $, then the mapping $\left[ 0,T\right] \times
C_{T}\ni \left( t,w\right) \mapsto X_{t}\left( w\right) \in \mathbb{R}$
satisfies the equation%
\begin{equation}
c\left( w\left( t\right) -\int_{0}^{t}\varkappa \left( X_{s}\left( w\right)
\right) ds\right) =X_{t}\left( w\right)  \label{4}
\end{equation}%
$\mathbb{P-}$ a.s., iff solves (strongly) Ito's differential equation%
\begin{eqnarray}
dx\left( t\right) &=&f\left( x\left( t\right) \right) dt+g\left( x\left(
t\right) \right) dw\left( t\right)  \label{5a} \\
x\left( 0\right) &=&c\left( 0\right)  \label{5b}
\end{eqnarray}%
$\mathbb{P-}$ a.s.
\end{theorem}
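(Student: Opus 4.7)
The plan is to reduce both directions to a single change-of-variable computation. Let $F$ be an antiderivative of $1/g$ on $\Delta$ normalized by $F(c(0))=0$. From (\ref{2}) we have $(F\circ c)'(x)=c'(x)/g(c(x))=1$, so $F\circ c=\mathrm{id}$ on its domain; in particular $F$ is a local inverse of $c$, with $F'(y)=1/g(y)$ and $F''(y)=-g'(y)/g(y)^2$. The whole argument pivots on the observation that, for any It\^{o} process $x(t)$ satisfying (\ref{5a})--(\ref{5b}), It\^{o}'s formula applied to $F(x(t))$ yields
\begin{equation*}
dF(x(t))=\frac{1}{g(x)}\bigl(f(x)\,dt+g(x)\,dw\bigr)-\frac{g'(x)}{2g(x)^2}\,g(x)^2\,dt=dw(t)-\varkappa(x(t))\,dt,
\end{equation*}
where the last equality uses precisely the definition (\ref{3}) of $\varkappa$. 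Combined with $F(x(0))=F(c(0))=0$, this integrates to $F(x(t))=w(t)-\int_0^t\varkappa(x(s))\,ds$.

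For the ``only if'' direction, I would take $x(t)=X_t(w)$ satisfying (\ref{4}). Then $y(t)\triangleq\int_0^t\varkappa(X_s(w))\,ds$ has finite variation (since $\varkappa$ is bounded) and $X_t(w)=c(w(t)-y(t))$. Applying It\^{o}'s formula to the $C^2$ function $c$ (whose derivatives are $c'=g\circ c$ and $c''=(g'\cdot g)\circ c$ by (\ref{2})) and noting that the quadratic variation of $w-y$ is simply $dt$, one computes
\begin{equation*}
dX_t=g(X_t)\,dw(t)+g(X_t)\!\left[\frac{g'(X_t)}{2}-\varkappa(X_t)\right]dt=g(X_t)\,dw(t)+f(X_t)\,dt,
\end{equation*}
again by (\ref{3}), with $X_0=c(0)$.

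For the ``if'' direction, given a strong solution $x(t)$ of (\ref{5a})--(\ref{5b}), the computation above gives $F(x(t))=w(t)-\int_0^t\varkappa(x(s))\,ds$, and applying $c$ to both sides, using $c\circ F=\mathrm{id}$, recovers $x(t)=c\!\left(w(t)-\int_0^t\varkappa(x(s))\,ds\right)$, which is (\ref{4}) with $X_t(w)=x(t)$.

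The main obstacles are not the algebra but the domain/regularity issues. First, one must justify that $c$ is a genuine (not merely local) diffeomorphism onto the range of $x(t)$, which requires $g$ to keep a constant sign on the relevant set $\Delta$; otherwise $F=c^{-1}$ has to be defined and the excursions of $x(t)$ controlled by a localization argument on stopping times $\tau_n=\inf\{t:x(t)\notin\Delta_n\}$ for an exhaustion $\Delta_n\nearrow\Delta$. Second, the use of It\^{o}'s formula requires $F\in C^2$, which follows from $g\in C^1$ and $g\neq0$; the bounded coefficient $\varkappa$ ensures the finite-variation part is absolutely continuous, so the covariation $d\langle w-y\rangle_t=dt$ is clean. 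Once these technicalities are handled (standard stopping-time localization), the two It\^{o} applications above deliver the equivalence.
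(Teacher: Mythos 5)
Your forward implication coincides with the paper's: both apply It\^{o}'s formula to $c(\widetilde{w}(t))$ with $\widetilde{w}=w-\int_{0}^{\cdot}\varkappa(X_{s}(w))\,ds$, use $c'=g\circ c$, $c''=(g'g)\circ c$ from (\ref{2}) and then (\ref{3}) to collapse the drift to $f$. Your reverse implication, however, is a genuinely different route. The paper removes the drift by Girsanov: with $\Lambda=\exp[\int_{0}^{T}\varkappa(X_{t}(w))\,dw-\tfrac{1}{2}\int_{0}^{T}\varkappa^{2}(X_{t}(w))\,dt]$ (where boundedness of $\varkappa$ gives $\mathbb{E}_{\mathbb{P}}\Lambda=1$), $\widetilde{w}$ is a Wiener process under $\widetilde{\mathbb{P}}$, the solution satisfies $dX_{t}=g(X_{t})[\tfrac{1}{2}g'(X_{t})\,dt+d\widetilde{w}(t)]$, and the identification $X_{t}(w)=c(\widetilde{w}(t))$ is obtained by checking that $c(\widetilde{w})$ solves that same equation (\ref{7}) --- a step that tacitly needs uniqueness for (\ref{7}). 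You instead apply It\^{o}'s formula to $F(x(t))$ with $F=\int dy/g(y)=c^{-1}$, obtain $F(x(t))=w(t)-\int_{0}^{t}\varkappa(x(s))\,ds$ pathwise, and compose with $c$. This buys a more elementary and symmetric proof: no change of measure, no uniqueness appeal for the transformed equation, and the two directions become one change-of-variable computation read in opposite orders. The price is the nondegeneracy you flag yourself: $F\in C^{2}$ and $c\circ F=\mathrm{id}$ need $g\neq 0$ on the range of $x(t)$, and this is a genuine restriction for this paper, since its own Example (b) has $g(x)=\vert x\vert^{\alpha}$ vanishing at $0$ while the corresponding diffusion does reach $0$; your localization by stopping times inside an exhaustion $\Delta_{n}$ handles interior excursions but not actual hitting of the zeros of $g$, where one would need an It\^{o}--Tanaka type argument or an added assumption that $g$ is bounded away from zero. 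The paper's Girsanov argument avoids inverting $c$ altogether and uses only the boundedness of $\varkappa$, which is an explicit hypothesis of the theorem; so your proof is correct under the extra nondegeneracy assumption, but as stated it does not cover the full degenerate setting the paper intends.
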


\begin{proof}
Assume that $X_{t}\left( w\right) $ solves (\ref{4}), and denote%
\begin{equation*}
\widetilde{w}\left( t\right) \triangleq w\left( t\right)
-\int_{o}^{t}\varkappa \left( X_{s}\left( w\right) \right)
\end{equation*}%
From Ito's formula and (\ref{2})(\ref{3}) we get%
\begin{eqnarray}
&&dc\left( \widetilde{w}\left( t\right) \right)  \label{6} \\
&=&\left[ \frac{1}{2}c^{\prime \prime }\left( \widetilde{w}\left( t\right)
\right) -c^{\prime }\left( \widetilde{w}\left( t\right) \right) \varkappa
\left( X_{t}\left( w\right) \right) \right] dt+c^{\prime }\left( \widetilde{w%
}\left( t\right) \right) dw\left( t\right)  \notag \\
&=&\left[ \frac{1}{2}g\left( c\left( \widetilde{w}\left( t\right) \right)
\right) g^{\prime }\left( c\left( \widetilde{w}\left( t\right) \right)
\right) -g\left( c\left( \widetilde{w}\left( t\right) \right) \right)
\varkappa \left( X_{t}\left( w\right) \right) \right] dt+g\left( c\left( 
\widetilde{w}\left( t\right) \right) \right) dw\left( t\right)  \notag \\
&=&g\left( c\left( \widetilde{w}\left( t\right) \right) \right) \left\{ 
\frac{1}{2}g^{\prime }\left( c\left( \widetilde{w}\left( t\right) \right)
\right) -\left[ \frac{g^{\prime }\left( X_{t}\left( w\right) \right) }{2}-%
\frac{f\left( X_{t}\left( w\right) \right) }{g\left( X_{t}\left( w\right)
\right) }\right] \right\} dt  \notag \\
&&+g\left( c\left( \widetilde{w}\left( t\right) \right) \right) dw\left(
t\right)  \notag
\end{eqnarray}%
Since (by the assumption) 
\begin{equation*}
X_{t}\left( w\right) =c\left( \widetilde{w}\left( t\right) \right)
\end{equation*}%
thus the RHS of (\ref{6}) equals%
\begin{equation*}
=f\left( X_{t}\left( w\right) \right) dt+g\left( X_{t}\left( w\right)
\right) dw\left( t\right)
\end{equation*}%
Hence $X_{t}\left( w\right) $ solves (\ref{5a}),(\ref{5b})\ since $%
X_{0}\left( w\right) =c\left( 0\right) $).

Now in the reverse direction. Let $X_{t}\left( w\right) $, $X_{0}\left(
w\right) =c\left( 0\right) $, solves strongly (\ref{5a}),(\ref{5b}). Then%
\begin{eqnarray*}
dX_{t}\left( w\right) &=&\left[ f\left( X_{t}\left( w\right) \right)
+g\left( X_{t}\left( w\right) \right) \varkappa \left( X_{t}\left( w\right)
\right) \right] dt+g\left( X_{t}\left( w\right) \right) \left[ dw\left(
t\right) -\varkappa \left( X_{t}\left( w\right) \right) dt\right] \\
&=&\left[ f\left( X_{t}\left( w\right) \right) +g\left( X_{t}\left( w\right)
\right) \varkappa \left( X_{t}\left( w\right) \right) \right] dt+g\left(
X_{t}\left( w\right) \right) d\widetilde{w}\left( t\right)
\end{eqnarray*}%
where $\widetilde{w}\left( t\right) $ (from Girsanov theorem) is a Wiener
process on a "new" space $\left( \Omega ,\mathfrak{F},\widetilde{\mathbb{P}}%
\right) $ with a measure%
\begin{eqnarray*}
\widetilde{\mathbb{P}}\left( A\right) &=&\int_{A}\Lambda d\mathbb{P}\text{,
\ }A\in \mathfrak{F} \\
\Lambda &=&\exp \left[ \int_{0}^{T}\varkappa \left( X_{t}\left( w\right)
\right) dw\left( t\right) -\frac{1}{2}\int_{0}^{T}\varkappa ^{2}\left(
X_{t}\left( w\right) \right) dt\right]
\end{eqnarray*}%
($\mathbb{E}_{\mathbb{P}}\left[ \Lambda \right] =1$, because $\varkappa $ is
bounded). It follows that $X_{t}\left( w\right) $ satisfies on $\left(
\Omega ,\mathfrak{F},\widetilde{\mathbb{P}}\right) $ the equation%
\begin{eqnarray}
&&dX_{t}\left( w\right)  \notag \\
&=&\left[ f\left( X_{t}\left( w\right) \right) +g\left( X_{t}\left( w\right)
\right) \left[ \frac{g^{\prime }\left( X_{t}\left( w\right) \right) }{2}-%
\frac{f\left( X_{t}\left( w\right) \right) }{g\left( X_{t}\left( w\right)
\right) }\right] \right] dt+g\left( X_{t}\left( w\right) \right) d\widetilde{%
w}\left( t\right)  \notag \\
&=&g\left( X_{t}\left( w\right) \right) \left[ \frac{g^{\prime }\left(
X_{t}\left( w\right) \right) }{2}dt+d\widetilde{w}\left( t\right) \right]
\label{7}
\end{eqnarray}%
It can be verified directly, that%
\begin{equation}
X_{t}\left( w\right) =c\left( \widetilde{w}\left( t\right) \right)  \label{8}
\end{equation}%
solves (\ref{7}). Hence, we get 
\begin{equation*}
X_{t}\left( w\right) =c\left( \widetilde{w}\left( t\right) \right) =c\left(
w\left( t\right) -\int_{0}^{t}\varkappa \left( X_{s}\left( w\right) \right)
ds\right)
\end{equation*}%
on the "old" space $\left( \Omega ,\mathfrak{F},\mathbb{P}\right) $.
\end{proof}

\begin{example}
(continued). With $g$ and $f$ as above, we have the integral equation, case
(a)%
\begin{equation*}
X_{t}\left( w\right) =\sinh \left( a+w\left( t\right) -\int_{0}^{t}\phi
\left( X_{s}\left( w\right) \right) ds\right)
\end{equation*}%
and Ito's equation%
\begin{equation*}
dx\left( t\right) =\left[ \frac{x\left( t\right) }{2}-\phi \left( x\left(
t\right) \right) \sqrt{1+x^{2}\left( t\right) }\right] dt+\sqrt{%
1+x^{2}\left( t\right) }dw\left( t\right)
\end{equation*}%
case (b)%
\begin{eqnarray*}
X_{t}\left( w\right) &=&\left[ sign\left( a+w\left( t\right)
-\int_{0}^{t}\phi \left( X_{s}\left( w\right) \right) ds\right) \right] \\
&&\times \left[ \left( 1-\alpha \right) \left\vert a+w\left( t\right)
-\int_{0}^{t}\phi \left( X_{s}\left( w\right) \right) ds\right\vert \right]
^{1/1-\alpha }
\end{eqnarray*}%
and Ito's equation%
\begin{equation*}
dx\left( t\right) =\left[ \frac{\alpha }{2}sign\left( x\left( t\right)
\right) \left\vert x\left( t\right) \right\vert ^{2\alpha -1}-\left\vert
x\left( t\right) \right\vert ^{\alpha }\phi \left( x\left( t\right) \right) %
\right] dt+\left\vert x\left( t\right) \right\vert ^{\alpha }dw\left(
t\right)
\end{equation*}
\end{example}

\begin{corollary}
Under the conditions of the equivalence theorem, we have 
\begin{eqnarray}
d\widetilde{w}\left( t\right) &=&-\varkappa \circ c\left( \widetilde{w}%
\left( t\right) \right) dt+dw\left( t\right)  \label{81} \\
\widetilde{w}\left( 0\right) &=&0  \notag
\end{eqnarray}
\end{corollary}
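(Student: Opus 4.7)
The plan is to read off the SDE for $\widetilde{w}$ directly from the definition given inside the proof of the equivalence theorem, so essentially no new computation is required. Recall that in the theorem's proof we set
\begin{equation*}
\widetilde{w}(t) = w(t) - \int_{0}^{t} \varkappa\bigl(X_{s}(w)\bigr)\, ds,
\end{equation*}
and the theorem's conclusion gives the pointwise identity $X_{t}(w) = c(\widetilde{w}(t))$, valid $\mathbb{P}$-a.s. on $[0,T]$.

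My first step would be to note that the initial condition $\widetilde{w}(0) = 0$ is immediate, since both summands in the above expression vanish at $t=0$. Next, since $\varkappa \circ X_\cdot (w)$ is bounded and measurable (because $\varkappa$ is assumed bounded and $X_\cdot(w)$ is continuous), the integral $\int_0^t \varkappa(X_s(w))\,ds$ is absolutely continuous with derivative $\varkappa(X_t(w))$ for a.e.\ $t$. In differential form this is exactly
\begin{equation*}
d\widetilde{w}(t) = dw(t) - \varkappa\bigl(X_{t}(w)\bigr)\, dt.
\end{equation*}

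The final step is to substitute the identity $X_{t}(w) = c(\widetilde{w}(t))$ from the equivalence theorem into the drift term, which replaces $\varkappa(X_t(w))$ by $\varkappa(c(\widetilde{w}(t))) = (\varkappa \circ c)(\widetilde{w}(t))$ and yields (\ref{81}). There is no real obstacle here: the work has already been done in proving the equivalence theorem, and this corollary just rewrites the defining relation of $\widetilde{w}$ as an SDE closed in $\widetilde{w}$ alone. The only point worth being careful about is that the manipulation is valid on the original probability space $(\Omega,\mathfrak{F},\mathbb{P})$ — which is automatic, because the drift-removal identity for $\widetilde{w}$ is a pathwise statement about $w$ and $X(w)$ and does not depend on the Girsanov change of measure used in the reverse implication of the theorem.
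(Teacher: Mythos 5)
Your proposal is correct and is essentially the paper's own argument: both rest on the single substitution of the identity $X_t(w)=c(\widetilde{w}(t))$ from (\ref{4}) into the drift term of the defining relation $\widetilde{w}(t)=w(t)-\int_0^t\varkappa(X_s(w))\,ds$. The only cosmetic difference is that the paper carries this out in integral form while you phrase it differentially (adding a brief justification of absolute continuity), which changes nothing of substance.
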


\begin{proof}
From (\ref{4}) follows that%
\begin{eqnarray*}
\widetilde{w}\left( t\right) &=&w\left( t\right) -\int_{0}^{t}\varkappa
\left( X_{s}\left( w\right) \right) ds \\
&=&w\left( t\right) -\int_{0}^{t}\varkappa \circ c\left( w\left( s\right)
-\int_{0}^{s}\varkappa \left( X_{u}\left( w\right) \right) du\right) ds \\
&=&w\left( t\right) -\int_{0}^{t}\varkappa \circ c\left( \widetilde{w}\left(
s\right) \right) ds
\end{eqnarray*}
\end{proof}

\begin{remark}
From (\ref{8}) and (\ref{81}) we have on $\left( \Omega ,\mathfrak{F},%
\widetilde{\mathbb{P}}\right) $%
\begin{equation*}
X_{t}\left( \widetilde{w}+\int_{0}^{\cdot }\varkappa \circ c\left( 
\widetilde{w}\left( s\right) \right) ds\right) =c\left( \widetilde{w}\left(
t\right) \right)
\end{equation*}
\end{remark}

\begin{example}
(a) Since in our example $\varkappa \circ c\left( x\right) =\phi \left(
\sinh \left( a+x\right) \right) $, hence%
\begin{equation*}
d\widetilde{w}\left( t\right) =-\phi \left( \sinh \left( a+\widetilde{w}%
\left( t\right) \right) \right) +dw\left( t\right)
\end{equation*}%
(b) here $\varkappa \circ c\left( x\right) =\phi \left( \left[ sign\left(
a+x\right) \right] \left[ \left( 1-\alpha \right) \left\vert a+x\right\vert %
\right] ^{1/1-\alpha }\right) $, hence%
\begin{equation*}
d\widetilde{w}\left( t\right) =-\phi \left( \left[ sign\left( a+\widetilde{w}%
\left( t\right) \right) \right] \left[ \left( 1-\alpha \right) \left\vert a+%
\widetilde{w}\left( t\right) \right\vert \right] ^{1/1-\alpha }\right)
+dw\left( t\right)
\end{equation*}
\end{example}

\begin{corollary}
(weak solutions) Let $b\left( t\right) $, $t\in \left[ 0,T\right] $ be a
Brownian motions on some probability space $\left( \Omega ^{\prime },%
\mathfrak{F}^{\prime },\mathbb{P}^{\prime }\right) $. Define%
\begin{equation*}
\Lambda =\exp \left[ -\int_{0}^{T}\varkappa \circ c\left( b\left( t\right)
\right) db\left( t\right) -\frac{1}{2}\int_{0}^{T}\left( \varkappa \circ
c\right) ^{2}\left( b\left( t\right) \right) dt\right]
\end{equation*}%
If 
\begin{equation*}
\mathbb{E}_{\mathbb{P}}\Lambda =1
\end{equation*}%
then 
\begin{equation*}
x_{b}\left( t\right) \triangleq c\left( b\left( t\right) \right)
\end{equation*}%
is a (weak) solution of (\ref{4}).
\end{corollary}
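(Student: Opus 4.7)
The plan is to use Girsanov's theorem in reverse, that is, to start with a Brownian motion $b$ on $(\Omega', \mathfrak{F}', \mathbb{P}')$ and to construct a new probability measure $\mathbb{Q}$ under which the drifted process $b(t)+\int_0^t \varkappa\circ c(b(s))\,ds$ becomes a Brownian motion. The pair formed by that drifted process (playing the role of $w$) and $x_b(t) = c(b(t))$ will then be a weak solution of (\ref{4}) by direct substitution.

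First I would set $\mu(t) \triangleq -\varkappa\circ c(b(t))$, so that $\Lambda$ is exactly the Doléans-Dade exponential $\mathcal{E}(\int \mu\, db)_T$. Since $\mathbb{E}_{\mathbb{P}'}\Lambda = 1$ by hypothesis, the measure $d\mathbb{Q} = \Lambda\, d\mathbb{P}'$ is a probability measure on $(\Omega',\mathfrak{F}')$, and Girsanov's theorem implies that
\begin{equation*}
w(t) \triangleq b(t) - \int_0^t \mu(s)\, ds = b(t) + \int_0^t \varkappa\circ c(b(s))\, ds
\end{equation*}
is a standard Brownian motion on $(\Omega',\mathfrak{F}',\mathbb{Q})$, adapted to the filtration generated by $b$ (hence by $w$ as well, since $b$ and $w$ generate the same filtration by the continuity of the integral term).

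Next I would substitute $x_b(t) = c(b(t))$ into the right-hand side of (\ref{4}). Writing
\begin{equation*}
w(t) - \int_0^t \varkappa(x_b(s))\, ds = b(t) + \int_0^t \varkappa\circ c(b(s))\, ds - \int_0^t \varkappa(c(b(s)))\, ds = b(t),
\end{equation*}
and applying $c$ to both sides gives $c\bigl(w(t) - \int_0^t \varkappa(x_b(s))\,ds\bigr) = c(b(t)) = x_b(t)$, which is precisely (\ref{4}) holding $\mathbb{Q}$-a.s. Thus the triple $\bigl((\Omega',\mathfrak{F}',\mathbb{Q}), w, x_b\bigr)$ is a weak solution in the standard sense.

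The only delicate point is the justification of the measure change itself, i.e.\ that $\mathbb{E}_{\mathbb{P}'}\Lambda = 1$ suffices to invoke Girsanov; this is precisely what the hypothesis asserts, so no Novikov-type criterion needs to be verified here. The rest is pure bookkeeping: a simple algebraic cancellation in the drift term and a check that $x_b(0) = c(b(0)) = c(0)$, matching the initial condition implicit in (\ref{4}) via Theorem~3.
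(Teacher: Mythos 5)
Your proposal is correct and follows essentially the same route as the paper: apply Girsanov's theorem with the given exponential $\Lambda$ (using $\mathbb{E}\Lambda=1$ to guarantee the new measure is a probability), declare $w(t)=b(t)+\int_{0}^{t}\varkappa \circ c\left( b\left( s\right) \right) ds$ a Wiener process under it, and verify (\ref{4}) by the same drift cancellation $w(t)-\int_{0}^{t}\varkappa \left( x_{b}\left( s\right) \right) ds=b(t)$. The extra remarks on the filtration and the initial condition are harmless additions, not a different argument.
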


\begin{proof}
According to Girsanov theorem 
\begin{equation*}
\mathbb{P}\left( A\right) \triangleq \int_{A}\Lambda d\mathbb{P}^{\prime }%
\text{, \ }A\in \mathfrak{F}
\end{equation*}%
is a probability measure, $\left( \Omega ,\mathfrak{F},\mathbb{P}\right) $
is a probability space, and%
\begin{equation*}
w\left( t\right) \triangleq b\left( t\right) +\int_{0}^{t}\varkappa \circ
c\left( b\left( s\right) \right) ds
\end{equation*}%
is a Wiener process on it. Hence%
\begin{eqnarray*}
x_{b}\left( t\right) &\triangleq &c\left( b\left( t\right) \right) \\
&=&c\left( w\left( t\right) -\int_{0}^{t}\varkappa \circ c\left( b\left(
s\right) \right) ds\right) \\
&=&c\left( w\left( t\right) -\int_{0}^{t}\varkappa \left( x_{b}\left(
s\right) \right) ds\right)
\end{eqnarray*}%
on $\left( \Omega ,\mathfrak{F},\mathbb{P}\right) $.
\end{proof}

\begin{remark}
K$-$ diffusions starting from random initial conditions can be easily
obtained. Let $\xi $ is a random variable on $\left( \Omega ,\mathfrak{F},%
\mathbb{P}\right) $, and consider the following generalization of (\ref{4})%
\begin{equation}
c\left( \xi +w\left( t\right) -\int_{0}^{t}\varkappa \left( X_{s}\left(
w\right) \right) ds\right) =X_{t}\left( w\right)  \label{9}
\end{equation}%
If $\xi $\ is stochastically independent on $w\left( t\right) $, $t\in \left[
0,T\right] $, then the solution of (\ref{9}) is a K$-$ diffusions with $%
X_{0}\left( w\right) =c\left( \xi \right) $.
\end{remark}

\section{Applications}

\subsection{Identification of financial instruments}

Consider two financial instruments. Denote their prices by $X$ and $Y$.
Moreover, assume that $X$ and $Y$ are driven by the same Wiener process and
assume $X$ is a K$-$ diffusion with $c$ and $\varkappa $ known. How can one
identify $Y$? There is a well known method of a "black box" identification
by Norbert Wiener. However, his method is essentially restricted to systems
of special kind; input and output must be observable. This is not the case
in financial modelling. Here we have the black box $w\rightarrow \left(
X_{t}\left( w\right) ,Y_{t}\left( w\right) \right) $ and one may observe the
output only. Hence, this method cannot be applied directly. To overcome this
difficulty, observe that, if $c^{-1}$ exists, than the mapping $w\rightarrow
X_{t}\left( w\right) $ is invertible, and%
\begin{equation*}
w\left( t\right) =c^{-1}\left( X\left( t\right) \right)
-\int_{0}^{t}\varkappa \left( X_{s}\right) ds
\end{equation*}%
is a Wiener process, hence, the input and output of this black box%
\begin{equation*}
X\rightarrow Y_{t}\left( X\right) =Y_{t}\left( c^{-1}\left( X\left( t\right)
\right) -\int_{0}^{t}\varkappa \left( X_{s}\right) ds\right)
\end{equation*}%
is observable. Now, Wiener's method of nonlinear systems identification can
be applied to the $Y-$ black box (see [11], Lecture 10 and 11)

\subsection{Fractional diffusions}

Let $H\in \left( 0,1\right) $ be a Hurst index, $B^{H}\left( t\right) $, $%
t\in \left[ 0,T\right] $ denotes a fractional Brownian motion (fBm) and
define%
\begin{equation*}
\varkappa ^{H}\left( t,x\right) =Ht^{2H-1}g^{\prime }\left( x\right) -\frac{%
f\left( x\right) }{g\left( x\right) }
\end{equation*}

\begin{proposition}
If the mapping $X_{t}^{H}\left( w\right) :\left[ 0,T\right] \times C\left( %
\left[ 0,T\right] ;\mathbb{R}\right) \rightarrow \mathbb{R}$, solves
nonlinear integral equation 
\begin{equation*}
c\left( B^{H}\left( t\right) -\int_{o}^{t}\varkappa ^{H}\left( s,X_{s}\left(
B^{H}\right) \right) ds\right) =X_{t}\left( B^{H}\right)
\end{equation*}%
then it solves (strongly) SDE%
\begin{equation*}
x\left( t\right) =c\left( 0\right) +\int_{0}^{t}f\left( x\left( s\right)
\right) ds+\int_{0}^{t}g\left( x\left( s\right) \right) dB^{H}\left( s\right)
\end{equation*}%
where the stochastic integral is the WIS\ integral.
\end{proposition}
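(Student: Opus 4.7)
The plan is to mimic the proof of Theorem 2, replacing standard Itô calculus with the fractional Itô formula for the WIS integral. Write $\widetilde{B}^{H}(t)\triangleq B^{H}(t)-\int_{0}^{t}\varkappa^{H}(s,X_{s}^{H}(B^{H}))\,ds$, so the hypothesis reads $X_{t}^{H}(B^{H})=c(\widetilde{B}^{H}(t))$. The task reduces to computing $dc(\widetilde{B}^{H}(t))$ and showing it equals $f(X_{t}^{H})\,dt+g(X_{t}^{H})\,dB^{H}(t)$ (WIS).

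The key tool is the Hu--\O{}ksendal type Itô formula for fBm: for a $C^{2}$ function $F$,
\[
dF(B^{H}(t))=F'(B^{H}(t))\,dB^{H}(t)+H\,t^{2H-1}\,F''(B^{H}(t))\,dt,
\]
with the first term a WIS integral. Since $t\mapsto \int_{0}^{t}\varkappa^{H}(s,X_{s}^{H})\,ds$ is absolutely continuous (hence of zero quadratic $2H$-variation contribution), a standard extension yields, for $\widetilde{B}^{H}$,
\[
dc(\widetilde{B}^{H}(t))=c'(\widetilde{B}^{H}(t))\,dB^{H}(t)-c'(\widetilde{B}^{H}(t))\,\varkappa^{H}(t,X_{t}^{H})\,dt+H\,t^{2H-1}\,c''(\widetilde{B}^{H}(t))\,dt.
\]
Using (\ref{2}) we have $c'(\widetilde{B}^{H})=g(c(\widetilde{B}^{H}))=g(X_{t}^{H})$ and $c''(\widetilde{B}^{H})=g'(X_{t}^{H})\,g(X_{t}^{H})$. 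Substituting and then inserting $\varkappa^{H}(t,x)=Ht^{2H-1}g'(x)-f(x)/g(x)$, the two drift contributions collapse:
\[
-g(X_{t}^{H})\bigl[Ht^{2H-1}g'(X_{t}^{H})-f(X_{t}^{H})/g(X_{t}^{H})\bigr]+Ht^{2H-1}g'(X_{t}^{H})g(X_{t}^{H})=f(X_{t}^{H}),
\]
so that $dc(\widetilde{B}^{H}(t))=f(X_{t}^{H})\,dt+g(X_{t}^{H})\,dB^{H}(t)$. Combined with the initial value $X_{0}^{H}=c(0)$, this is exactly the claimed SDE in integral form.

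The main obstacle is the legitimacy of the fractional Itô formula in the form used above: one must argue that $c(\widetilde{B}^{H}(t))$ lies in the domain of the WIS integral, that the WIS integral interacts correctly with the pathwise Lebesgue integral $\int_{0}^{t}\varkappa^{H}(s,X_{s}^{H})\,ds$ (which is a random, adapted process of finite variation, not a deterministic shift), and that the correction term remains $Ht^{2H-1}c''$ — i.e., that the bounded-variation perturbation contributes no additional second-order piece. Under the boundedness hypothesis implicit in the statement (bounded $\varkappa$, sufficient regularity of $g,f$), this follows from the standard extension of the fBm Itô formula to processes of the form $B^{H}(t)+V(t)$ with $V$ of bounded variation, as developed in the Malliavin/white-noise calculus for fBm. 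Once this is granted, the algebraic identity above closes the proof; no Girsanov step is needed since only the forward direction is asserted.
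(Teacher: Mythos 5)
Your proposal is correct and follows essentially the same route as the paper: define the drift-shifted process $\widetilde{B}^{H}$, apply the fractional It\^{o} formula with correction term $Ht^{2H-1}c''$, use $c'=g\circ c$ and the definition of $\varkappa^{H}$ to collapse the drift to $f(X_{t}^{H})$, and read off the WIS--SDE with initial value $c(0)$. Your added discussion of why the It\^{o} formula extends to $B^{H}$ perturbed by an adapted bounded-variation term is a reasonable refinement of a point the paper passes over by simply citing the fBm It\^{o} formula of Biagini--Hu--\O{}ksendal--Zhang.
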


\begin{proof}
Set 
\begin{equation*}
w^{H}\left( t\right) =B^{H}\left( t\right) -\int_{o}^{t}\varkappa ^{H}\left(
s,X_{s}^{H}\left( B^{H}\right) \right) ds
\end{equation*}%
From Ito's formula for fBm (p. 161 of [9]) we have%
\begin{eqnarray*}
&&dc\left( w^{H}\left( t\right) \right) \\
&=&\left[ Ht^{2H-1}c^{\prime \prime }\left( w^{H}\left( t\right) \right)
-\varkappa ^{H}\left( t,X_{t}^{H}\left( B^{H}\right) \right) c^{\prime
}\left( w^{H}\left( t\right) \right) \right] dt+c^{\prime }\left(
w^{H}\left( t\right) \right) dB^{H}\left( t\right) \\
&=&\left[ Ht^{2H-1}gg^{\prime }\left( c\left( w^{H}\left( t\right) \right)
\right) -\left[ Ht^{2H-1}g^{\prime }\left( X_{t}^{H}\left( B^{H}\right)
\right) -\frac{f\left( X_{t}^{H}\left( B^{H}\right) \right) }{g\left(
X_{t}^{H}\left( B^{H}\right) \right) }\right] g\left( c\left( w^{H}\left(
t\right) \right) \right) \right] dt \\
&&+g\left( c\left( w^{H}\left( t\right) \right) \right) dB^{H}\left( t\right)
\\
&=&f\left( X_{t}^{H}\left( B^{H}\right) \right) dt+g\left( X_{t}^{H}\left(
B^{H}\right) \right) dB^{H}\left( t\right)
\end{eqnarray*}%
since by the assumption $c\left( w^{H}\left( t\right) \right)
=X_{t}^{H}\left( B^{H}\right) $.
\end{proof}

\subsection{Smooth densities}

Set $\widetilde{F}\left( x\right) =\mathbb{P}\left( \widetilde{w}\left(
t\right) <x\right) $. Then%
\begin{eqnarray*}
\mathbb{P}\left( X_{t}\left( w\right) <x\right) &=&\mathbb{P}\left( c\left( 
\widetilde{w}\left( t\right) \right) <x\right) \\
&=&\widetilde{F}\circ c^{-1}\left( x\right)
\end{eqnarray*}%
Hence, the smoothness density problem for $X_{t}\left( w\right) $, is
reduced to investigation of ordinary function $\widetilde{F}\circ c^{-1}$.

\begin{example}
\begin{eqnarray*}
\text{(a) \ }\widetilde{F}\circ c^{-1}\left( x\right) &=&\widetilde{F}\left(
\sinh ^{-1}\left( a+x\right) \right) \\
\text{(b) \ }\widetilde{F}\circ c^{-1}\left( x\right) &=&\widetilde{F}\left(
sign\left( a+x\right) \left( 1-\alpha \right) ^{1-\alpha }\left\vert
a+x\right\vert ^{1-\alpha }\right)
\end{eqnarray*}
\end{example}

\begin{acknowledgement}
I would like to thank Professor Moshe Zakai for his remarks and suggestions.
\end{acknowledgement}

\begin{center}
\bigskip {\Large References}
\end{center}

[1] A.N. Kolmogorov, Uber die analytischen methoden in der

\ \ \ \ wahrscheinlichkeitsrechnung, Math. Ann. 104, (1931), 415-458

[2] K. Ito, On stochastic differential equations, Memoirs Amer.

\ \ \ \ Math. Soc. 4, (1951), 1- 51

[3] D.W. Stroock, S.R.S. Varadhan, Multidimensional Diffusion Processes,

\ \ \ \ Springer-Verlag 1979

[4] E. Isobe, S. Sato, Wiener-Hermite expansion of a process generated

\ \ \ \ by an Ito stochastic differential equation, J. Appl. Prob. 20
(1983), 754-765

[5] K. Ito, Selected Papers, Edited by D.W. Stroock and S.R.S. Varadhan,

\ \ \ \ Springer-Verlag 1987

[6] D.W. Stroock, Markov Processes from K. Ito's Perspective,

\ \ \ \ Princeton Univ. Press 2003

[7] H. Sussman, An interpretation of stochastic differential equations

\ \ \ \ as orinary differential equations which depend on the sample point,

\ \ \ \ Bull.Amer. Math. Soc. 83, (1977) 296-298

[8] H. Doss, Liens entre equations differentielles stochastiques et
ordinaires,

\ \ \ \ Ann. Inst. Henri Poincare 13, (1977), 99-125 \ F.

[9] F. Biagini, Y. Hu, B. Oksendal, T. Zhang, Stochastic Calculus for
Fractional

\ \ \ \ Brownian Motion and Applications, Springer, 2008

[10] I. Karatzas, S.E. Shreve, Brownian Motion and Stochastic Calculus,

\ \ \ \ \ Springer-Verlag, 1991

[11] N. Wiener, Nonlinear Problems in Random Theory, Tech. Press of MIT,

\ \ \ \ \ \ John Wiley\&Sons, Chapman\&Hall, 1958

\end{document}